\documentclass[12pt,a4paper]{amsart}      

\usepackage{amsmath,amsthm,amssymb,latexsym,a4wide,tikz,multicol,tikz-cd}
\usepackage{arydshln,multirow}
\usepackage{tikz-qtree,tikz-qtree-compat}
\usepackage{mathtools,stmaryrd}
\usepackage{tikz}
\tikzset{font=\small}
\usepackage{enumerate}
\usetikzlibrary{matrix,arrows}
\usetikzlibrary{positioning}
\usetikzlibrary{cd}
\usepackage[colorlinks]{hyperref}

\usepackage{mathrsfs}

\newtheorem{theorem}{Theorem}[section]
\newtheorem{lemma}[theorem]{Lemma}

\newtheorem{proposition}[theorem]{Proposition}

\theoremstyle{definition}
\newtheorem*{definition*}{Definition}

\newtheorem{question}[theorem]{Question}
\newtheorem{problem}[theorem]{Problem}

\newcommand{\N}{\mathbb N}
\newcommand{\F}{\mathcal{F}}

\newcommand{\IZ}{\mathbb{Z}}

\newcommand{\Sch}{\operatorname{Sch}}
\newcommand{\Schf}{\Sch^{\operatorname{fin}}}
\newcommand{\Schp}{\Sch^{\operatorname{p}}}

\title{Combinatorics of Schur ultrafilters}
\author{Serhii Bardyla}
\address{Serhii Bardyla: University of Vienna, Institute of Mathematics, Vienna, Austria.}
\email{sbardyla@gmail.com}

\makeatletter
\@namedef{subjclassname@2020}{%
  \textup{2020} Mathematics Subject Classification}
\makeatother

\subjclass[2020]{03E05, 05D10}
\keywords{Schur ultrafilter, P-point, infinitary Schur ultrafilter, Schur set, Schur number.}

\thanks{The research of the author was funded in whole by the Austrian Science Fund FWF [10.55776/ESP399].}
\begin{document}

\begin{abstract}
     In this paper, we provide a combinatorial characterization of the elements of Schur ultrafilters on countable commutative groups. Using this characterization, we construct a free Schur ultrafilter on $\mathbb Z$ that is not infinitary Schur. Moreover, assuming the Continuum Hypothesis, we establish the existence of a free Schur P-point on $\mathbb Z$.   
\end{abstract}

\maketitle

\section{Introduction and main results}
A family $\F$ of subsets of a set $X$ is called a {\em filter on} $X$ if
\begin{enumerate}[\rm(i)]
\item $\emptyset\notin \F$;
\item $A\cap B\in\F$ for every $A,B\in\F$;
\item if $A\in \F$ and $B\supseteq A$, then $B\in\F$.
\end{enumerate}
A filter $\F$ on a set $X$ is called an {\em ultrafilter} if $\F$ is maximal with respect to the inclusion among all filters on $X$ or, equivalently, for each $A\subseteq X$ either $A\in\F$ or $X\setminus A\in \F$. A family $\mathcal B\subseteq \F$ is called a {\em base} of a filter $\F$ if for each $F\in\F$ there exists $B\in\mathcal B$ such that $B\subseteq F$. A filter $\F$ is called {\em free} if $\bigcap \F=\emptyset$.


The Stone-\v{C}ech compactification $\beta (X)$ of a discrete space $X$ is the set of all ultrafilters on $X$ endowed with a topology $\tau$ generated by the base $\mathcal B=\{\langle A\rangle: A\subseteq X\}$, where $$\langle A\rangle=\{u\in\beta (X): A\in u\}.$$ Recall that each element $x\in X$ is identified with the principal ultrafilter $\{A\subseteq X: x\in A\}$. If $S$ is a discrete semigroup, then the semigroup operation on $S$ can be canonically lifted to a semigroup operation on $\beta (S)$ as follows: if $u,v\in\beta (S)$, then $uv$ is a filter generated by the base consisting of the sets $\bigcup_{x\in U}xV_x$, where $U\in u$ and $\{V_x:x\in U\}\subseteq v$ are arbitrary. 
By~\cite[Theorem~4.1]{HS}, the defined above semigroup operation on $\beta (S)$ is unique among those extending the operation of $S$ and satisfying the following two natural conditions:
\begin{itemize}
    \item[(i)] for each $u\in\beta (S)$ the right shift $\rho_u:\beta(S)\rightarrow \beta(S)$, $x\mapsto xu$ is continuous;
    \item[(ii)] for each $s\in S$ the left shift $\lambda_s:\beta(S)\rightarrow \beta(S)$, $x\mapsto sx$ is continuous.
\end{itemize}
 
The classical Ellis's theorem (see \cite[Theorem 2.5]{HS}) implies the existence of free idempotent ultrafilters on every infinite semigroup. In other words, for each infinite semigroup $S$ there exists a free ultrafilter $u\in\beta(S)$ such that $uu=u$. 
For more about the algebraic structure of $\beta (S)$ see the monograph~\cite{HS} and references therein.  

\begin{definition*}
 An ultrafilter $u$ on a semigroup is called 
 \begin{enumerate}[\rm(i)]
     \item {\em Schur} if for each $U\in u$ there exist $a,b\in U$ such that $ab\in U$;
     \item {\em infinitary Schur} if for each $U\in u$ there exist $a\in U$ and an infinite subset $B\subseteq U$ such that $aB\subseteq U$.
 \end{enumerate}
\end{definition*}

It is straightforward to check that an ultrafilter $u$ on a semigroup is idempotent, if and only if for each $U\in u$ there exists $x\in U$ and $V_x\in u$ such that $xV_x\subseteq U$.
Thus, for a free ultrafilter $u$ on a semigroup the following implications hold:
$$u\hbox{ is idempotent }\Longrightarrow u\hbox{ is infinitary Schur }\Longrightarrow u\hbox{ is Schur}.$$

Schur ultrafilters were introduced by Protasov~\cite{P} to construct certain group topologies. Recently~\cite{BZl, Zl} it was shown that these ultrafilters play a crucial role in a description of the Bohr compactification of topological groups, as well as in the automatic continuity of group operations in so-called chart groups.

\begin{definition*}
   For a discrete semigroup $S$ let 
   \begin{enumerate}[\rm(i)]
       \item $\Sch(S)$ be the subspace of $\beta(S)$ consisting of all free Schur ultrafilters;
       \item $\Sch^{\infty}(S)$ be the subspace of $\beta(S)$ consisting of all free infinitary Schur ultrafilters;
       \item $\Schf(S)$ be the subspace of $\beta(S)$ consisting of all free Schur ultrafilters which are not infinitary Schur.
   \end{enumerate}
\end{definition*}


The following problem was posed in~\cite[Problem 3.15]{BZl}
\begin{problem}[Bardyla, Zlato\v{s}]\label{prob1}
Does there exist a free Schur ultrafilter on a countable group $G$ that is not infinitary Schur?     
\end{problem}

The first principal result of this paper is the following solution of Problem~\ref{prob1}.

\begin{theorem}\label{dense}
 The set $\Schf(\mathbb Z)$ is an open dense subset of $\Sch(\mathbb Z)$.   
\end{theorem}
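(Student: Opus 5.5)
The plan is to prove openness directly from the definitions and to prove density by a combinatorial characterization of which sets lie in some free Schur ultrafilter, followed by a block construction.

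\emph{Openness.} Let $u\in\Schf(\mathbb Z)$. Since $u$ is not infinitary Schur, there is $U\in u$ such that for no $a\in U$ and no infinite $B\subseteq U$ do we have $a+B\subseteq U$. Equivalently, $U\cap(U-a)$ is finite for every $a\in U$. This property of $U$ is inherited by every ultrafilter containing $U$. Hence $\langle U\rangle\cap\Sch(\mathbb Z)$ is a neighbourhood of $u$ contained in $\Schf(\mathbb Z)$, so $\Schf(\mathbb Z)$ is open in $\Sch(\mathbb Z)$.

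\emph{Characterization.} Call $X\subseteq\mathbb Z$ \emph{Schur-free} if it contains no $a,b$ with $a+b\in X$. Let $\mathcal I$ be the family of sets coverable by a finite set together with finitely many Schur-free sets; $\mathcal I$ is an ideal. I claim a set $A$ belongs to some free Schur ultrafilter if and only if $A\notin\mathcal I$.
\begin{itemize}
\item If $A\notin\mathcal I$, any ultrafilter containing $A$ and disjoint from $\mathcal I$ is free and Schur, since each of its elements is neither finite nor Schur-free.
\item Conversely, if $A=F\cup X_1\cup\dots\cup X_k$ with $F$ finite and each $X_i$ Schur-free, then a free ultrafilter containing $A$ contains some $X_i$ and so is not Schur.
\end{itemize}
By a standard compactness (K\"onig) argument, $A\notin\mathcal I$ implies the following: for every $k$ and every $n$ there is a finite $A'\subseteq A\setminus[-n,n]$ that cannot be covered by $k$ Schur-free sets.

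\emph{Density.} Fix $u\in\Sch(\mathbb Z)$ and $A\in u$, so $A\notin\mathcal I$. It suffices to find $W\subseteq A$ with $W\notin\mathcal I$ and $W\cap(W-a)$ finite for every $a\in\mathbb Z$. Then any ultrafilter containing $W$ and avoiding $\mathcal I$ lies in $\langle A\rangle\cap\Schf(\mathbb Z)$.

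I would build $W=\bigcup_k A_k$ from finite blocks. The block $A_k$ is chosen inside $A\setminus[-n_k,n_k]$, with $n_k$ much larger than $2\max\{|x|:x\in A_{k-1}\}+k$, so it lies far beyond the earlier blocks.

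The main obstacle is that a block may contain pairs with small difference, which would make some $W\cap(W-a)$ infinite. I would handle this as follows.
\begin{enumerate}
\item Take a finite $A_k'\subseteq A\setminus[-n_k,n_k]$ that is not coverable by $k(2k+1)$ Schur-free sets.
\item Consider the graph on $A_k'$ joining $x,y$ whenever $0<|x-y|<k$. Its degree is at most $2k-2$, so it has a proper colouring with $2k+1$ colours.
\item Some colour class $A_k$ is not coverable by $k$ Schur-free sets; otherwise $A_k'$ would be coverable by $k(2k+1)$ of them. Distinct elements of $A_k$ differ by at least $k$.
\end{enumerate}

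\emph{Verification.} Fix $a\neq0$. If $x,x+a\in W$ with $x\in A_j$ and $j$ large, the gaps between blocks force $x+a\in A_j$ as well. Then $|a|\ge j$, so only finitely many $j$ contribute, and $W\cap(W-a)$ is finite. Finally, suppose $W\subseteq F\cup X_1\cup\dots\cup X_k$ with $F$ finite and each $X_i$ Schur-free. Choose $j\ge k$ with $A_j\cap F=\emptyset$. Then $A_j$ is covered by $k\le j$ Schur-free sets, a contradiction. Hence $W\notin\mathcal I$, which completes the density argument.
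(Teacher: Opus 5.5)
Your proof is correct, but it rests on a different key lemma than the paper's.

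\textbf{The paper's route.} The paper first proves Theorem~\ref{Schursubset}. This identifies the members of free Schur ultrafilters with Schur sets in the strong sense of the numbers $S(r,k)$, where a single $a$ serves $k$ summands $b_1,\dots,b_k$ at once. That step needs Proposition~\ref{nice} (imported from earlier work), the compactness argument of Proposition~\ref{fin}, and the partition regularity of Proposition~\ref{partition}. The paper then takes each $B_{k+1}$ to be an initial segment of length $S(k+1,k+1)$ of a residue class modulo $x_q+1$ selected by $u$. This yields the separation $(B_i+B_j)\cap B_m\neq\emptyset\iff i=j=m$, and hence $\{b\in W:a+b\in W\}\subseteq B_n$ for $a\in B_n$.

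\textbf{Your route.} You replace Theorem~\ref{Schursubset} by the soft characterization through the ideal $\mathcal I$ generated by finite and Schur-free sets. This needs only the extension of $\{A\}\cup\{\mathbb Z\setminus I: I\in\mathcal I\}$ to an ultrafilter, plus compactness to extract finite blocks not coverable by $k$ Schur-free sets. You also replace the $u$-selected residue classes by two devices: growth of absolute values between blocks, and a bounded-degree colouring inside each block (residues modulo $k$ would do the same job). With these you check directly that every nonzero difference occurs only finitely often in $W$.

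\textbf{What each buys.} The paper's approach yields extra combinatorial information of independent interest: every set outside your $\mathcal I$ is Schur in the two-dimensional sense. Yours shows that this information is not needed for Theorem~\ref{dense}. The same ideal would also handle the successor and limit stages in the proof of Theorem~\ref{pp}.

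\textbf{One cosmetic slip.} In the reduction you ask for $W\cap(W-a)$ to be finite for every $a\in\mathbb Z$. For $a=0$ this set is $W$ itself, so the requirement should be for $a\neq 0$. That is what you actually verify, and it suffices because $0\notin W$.
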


\begin{definition*}
    An ultrafilter $u$ on a countable set $X$ is called 
\begin{enumerate}[\rm (i)]
    \item {\em weak P-point} if $u$ is not in the closure of any countable subset of $\beta X\setminus (X\cup \{u\})$;
    \item {\em P-point} if for every countable subset $\{U_n:n\in\N\}$ of $u$, there exists $U\in u$ such that $U\subseteq^* U_n$ for all $n\in\N$; 
    \item {\em selective} if for every partition $\{C_n:n\in\N\}$ of $X$ such that $C_n\notin u$ for all $n\in\N$, there exists $U\in u$ such that $|U\cap C_n|\leq 1$ for all $n\in\N$. 
\end{enumerate}
\end{definition*}
It is known that each selective ultrafilter is a P-point and each P-point is a weak P-point.
Kunen~\cite{Kunen} showed that ZFC implies the existence of a free weak P-point on $\mathbb Z$. However, the existence of free P-points and selective ultrafilters on $\mathbb Z$ is consistent with ZFC, but cannot be derived from ZFC alone (see~\cite{Chodounsky, Shelah}). Notice that a free idempotent ultrafilter $u$ on a countable group $G$ cannot be a weak P-point, as $u$ is an accumulation point of the countable family $\{xu: x\in G\}\subseteq \beta G\setminus G$. 

The following result was proven in~\cite[Propositions 3.12 and 3.13]{BZl} 
\begin{proposition}[Bardyla, Zlato\v{s}]\label{old}
For a countable group $G$ the following assertions hold: 
\begin{enumerate}[\rm (i)]
    \item if $u\in \Sch^{\infty}(G)$, then $u$ is not a P-point;
    \item if $u\in \Sch(G)$ and $G$ is commutative, then $u$ is not selective.
\end{enumerate}
\end{proposition}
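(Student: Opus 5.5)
For (i), I would show that a free P-point $u$ on a countable group $G$ admits some $U\in u$ such that $aU\cap U$ is finite for every $a\in U$. That suffices: if $a\in U$ and $B\subseteq U$ is infinite with $aB\subseteq U$, then $aB\subseteq aU\cap U$, and $aB$ is infinite because left translation is injective, which is a contradiction.

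\emph{Separating sets.} Fix $a\ne e$. The map $x\mapsto ax$ has no fixed points on $G$. By Kat\v{e}tov's three-set lemma, $G$ splits into three sets $A_1,A_2,A_3$ with $aA_i\cap A_i=\emptyset$ for each $i$. One of them, call it $U_a$, lies in $u$. Put $U_e=G\setminus\{e\}$.

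\emph{Diagonalizing.} Since $u$ is a P-point, there is $U\in u$ with $U\subseteq^* U_a$ for every $a\in G$. For $a\in U$ we have $a\ne e$, and
$$aU\cap U\subseteq a(U\setminus U_a)\cup (aU_a\cap U_a)\cup (U\setminus U_a).$$
The middle set is empty and the other two are finite, so $aU\cap U$ is finite, as required.

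For (ii), let $G=\{g_0,g_1,\dots\}$ be commutative, written additively, and assume $u$ is a free selective ultrafilter. I would produce $U\in u$ containing no solution of $x+y=z$.

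\emph{Excluding doubling.} The map $x\mapsto 2x$ has no fixed points on $G\setminus\{0\}$. Kat\v{e}tov's lemma (which needs no injectivity) gives $V\in u$ with $0\notin V$ and $2V\cap V=\emptyset$.

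\emph{Interval partition.} Choose $0=m_0<m_1<\cdots$ such that whenever $i,j<m_n$, both $g_i+g_j$ and $g_i-g_j$ have index $<m_{n+1}$. Let $C_n=\{g_k: m_n\le k<m_{n+1}\}$. Each $C_n$ is finite, so $C_n\notin u$. Selectivity gives $W\in u$ meeting each $C_n$ in at most one point. One of $\bigcup_n C_{2n}$ and $\bigcup_n C_{2n+1}$ lies in $u$; intersect $W$ and $V$ with it to get $U\in u$. Now $U$ has at most one point per block, its points lie in pairwise non-adjacent blocks, $0\notin U$, and $2U\cap U=\emptyset$.

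\emph{Excluding $x+y=z$.} Suppose $x+y=z$ with $x,y,z\in U$, lying in blocks $C_i,C_j,C_k$. By symmetry assume $i\le j$. The case $x=y$ is impossible since $2U\cap U=\emptyset$.

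\emph{Case $x\neq y$.} Then $i<j$ because blocks contain at most one point of $U$.
\begin{itemize}
\item If $k>j$, the choice of the $m_n$ puts $z$ in a block of index $\le j+1$, so $k=j+1$. This contradicts non-adjacency.
\item If $k=j$, then $z=y$ and hence $x=0\notin U$, a contradiction.
\item If $k<j$, then $y=z-x$ with $z,x$ in blocks of index $\le \max(i,k)<j$. So $y$ lies in a block of index $\le\max(i,k)+1$, forcing $j=\max(i,k)+1$. This again contradicts non-adjacency.
\end{itemize}
Hence $U$ contains no $a,b$ with $a+b\in U$, so $u$ is not Schur.

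The main obstacle is the diagonal case $a=b$ in (ii): it cannot be controlled by block indices alone, since doubling is not invertible. The Kat\v{e}tov step applied to $x\mapsto 2x$ is what handles it.
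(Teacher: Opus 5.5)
The paper does not prove this proposition. It only quotes it from [BZl, Propositions 3.12 and 3.13], so there is no in-paper argument to compare with. Judged on its own, your proof is correct apart from one slip, which is easy to repair.

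\textbf{The slip in (i).} Putting $U_e=G\setminus\{e\}$ into the diagonalization does nothing. Every set satisfies $U\subseteq^* U_e$, so this does not give $e\notin U$. If $e\in U$, then $eU\cap U=U$ is infinite, and your claim fails for $a=e$. The fix is to replace $U$ by $U\setminus\{e\}$. This set is still in $u$ because $u$ is free, and it still satisfies $U\setminus\{e\}\subseteq^* U_a$ for all $a\ne e$. Your three-set inclusion then gives that $aU\cap U$ is finite for every $a\in U$.

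\textbf{A technicality in (ii).} When $G$ has elements of order $2$, doubling does not map $G\setminus\{0\}$ into itself. Kat\v{e}tov's lemma is stated for fixed-point-free self-maps, so it does not apply literally. You can use either of two fixes:
\begin{enumerate}
\item use the version of the lemma that partitions the non-fixed points of an arbitrary self-map $f$ into three sets $A_i$ with $f[A_i]\cap A_i=\emptyset$; or
\item redefine the map on nonzero elements of order $2$ to send each to some other nonzero point.
\end{enumerate}
Since $0\notin A_i$, a pair $x,2x\in A_i$ would have $2x\neq 0$, so either fix still yields $2V\cap V=\emptyset$.

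\textbf{The rest checks out.} In (ii), sums and differences of elements from blocks of index at most $n$ land in blocks of index at most $n+1$. Your case analysis therefore always forces one of two outcomes. Either two of $x,y,z$ share a block, which is excluded by $0\notin U$ and $2U\cap U=\emptyset$. Or two points of $U$ sit in adjacent blocks, which is excluded by the parity step.

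\textbf{What your argument buys.} Your proof of (ii) uses selectivity only through one property: every partition of $G$ into finite sets has a member of $u$ meeting each piece in at most one point. That is, it only uses that $u$ is a Q-point. So you have in fact shown that no free Q-point on a countable commutative group is Schur, which is stronger than (ii). It also does not need the Ramsey property for triples that selective ultrafilters enjoy. Likewise, (i) uses only the P-point property and Kat\v{e}tov's lemma, and never uses commutativity, matching the generality of the statement.
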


Proposition~\ref{old} motivates the following problem, posed in~\cite[Problem 3.16]{BZl}.

\begin{problem}[Bardyla, Zlato\v{s}]\label{prob2}
Is the existence of a free Schur ultrafilter on $\mathbb Z$ that is a P-point consistent with ZFC?    
\end{problem}

By $\Sch^{\operatorname{p}}(\IZ)$ we denote the set of all free Schur ultrafilters on $\IZ$ which are P-points. The second principal result of this paper solves Problem~\ref{prob2} under the Continuum Hypothesis (shortly, CH).


\begin{theorem}\label{pp}
{\em (CH)} The set $\Schp(\IZ)$ is dense in $\Sch(\IZ)$. 
\end{theorem}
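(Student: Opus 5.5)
Under CH we will build a P-point by a transfinite recursion of length $\omega_1$. The recursion follows a family of sets that stays "Schur-large". The key tool is the combinatorial characterization of elements of Schur ultrafilters, which the abstract promises; we assume it is available in the form we need. Call a set $A\subseteq\IZ$ \emph{Schur-large} if $A\in u$ for some free Schur ultrafilter $u$. The characterization should say that $A$ is Schur-large if and only if for every finite partition $A=A_1\cup\dots\cup A_k$, some piece $A_i$ contains infinitely many configurations $a,b,a+b$; these should be quantified so that they escape every finite set, i.e. $\{a,b,a+b\}\cap F=\emptyset$ for each finite $F$. In one direction, if no piece were good, then any ultrafilter containing $A$ contains some $A_i$; deleting a finite set from $A_i$ gives a Schur-free member, contradicting Schur-ness. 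In the other direction, the family of subsets of $A$ whose complements in $A$ are not Schur-large in this sense forms a filter base on $A$ (this uses a partition-regularity argument). By Zorn any maximal such filter is a Schur ultrafilter.

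Fix a nonempty basic open set $\langle A\rangle\cap\Sch(\IZ)$, so $A$ is Schur-large. By CH, enumerate all subsets of $\IZ$ as $\{X_\alpha:\alpha<\omega_1\}$. We construct a $\subseteq^*$-decreasing sequence $(A_\alpha)_{\alpha<\omega_1}$ of Schur-large sets, starting from $A_0=A$. The \emph{invariant} is stronger than mere Schur-largeness: each $A_\alpha$ should be \emph{hereditarily} Schur-large with respect to finite partitions, in the sense of the characterization. The three kinds of steps are as follows.
\begin{enumerate}[\rm(i)]
\item At a successor step, by the characterization applied to the partition $A_\alpha=(A_\alpha\cap X_\alpha)\cup(A_\alpha\setminus X_\alpha)$, one of the two pieces is Schur-large; take $A_{\alpha+1}$ to be that piece.
\item To force Schur-ness of the limit ultrafilter, we also make sure that every $A_\alpha$ contains a triple $a,b,a+b$. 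This follows from Schur-largeness. The ultrafilter $u$ generated by $\{A_\alpha\}$ will then decide every set.
\item At a countable limit $\lambda$, we need a pseudo-intersection $B\subseteq^* A_{\alpha_n}$ for a cofinal sequence $\alpha_n\nearrow\lambda$ that is still Schur-large.
\end{enumerate}
Since every countable subfamily of $u$ is handled at some limit stage, this yields both the P-point property and the density statement.

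The limit step is the main obstacle. A naive diagonal pseudo-intersection, taking finitely many triples from each $A_{\alpha_n}$, is Schur-small, because partitions can separate the chosen triples. So we will strengthen the invariant. Each $A_\alpha$ should carry a witness that is robust under passing to infinite subfamilies: a sequence of finite sets $F^\alpha_n\subseteq A_\alpha$ whose "Schur numbers" $s(F^\alpha_n)$ tend to infinity. Here $s(F)$ denotes the largest $k$ such that every $k$-colouring of $F$ has a monochromatic Schur triple. Such a union $\bigcup_n F_n$ with $s(F_n)\to\infty$ is Schur-large, since any $k$-partition is eventually handled inside a single $F_n$. The invariant is also preserved under 2-partitions: if $s(F)\ge 2k$, then one of $F\cap X$, $F\setminus X$ has Schur number at least $k$, by a colour-doubling argument. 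We will take the pieces $F_n$ far apart, so that triples meeting two pieces are irrelevant. At a limit stage, we then diagonalize by choosing $F_n\subseteq A_{\alpha_0}\cap\dots\cap A_{\alpha_n}$ with $s(F_n)\ge n$. This is possible because the witnesses for $A_{\alpha_n}$ lie almost inside the earlier sets, up to finitely many points, and removing a finite set costs nothing once the pieces are far out. The resulting $B=\bigcup_n F_n$ satisfies $B\subseteq^* A_{\alpha_n}$ for all $n$ and keeps the invariant. Making the halving of Schur numbers under 2-partitions rigorous is the delicate combinatorial point. If it fails as stated, the fallback is a weaker monotone measure: $F$ is $k$-good if every $k$-colouring has a monochromatic triple, and a $2k$-good set splits into two parts, one of which is $k$-good.
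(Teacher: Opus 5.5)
Your overall plan matches the paper's proof. It is a CH recursion of length $\omega_1$ along a $\subseteq^*$-decreasing chain. Successor steps decide $X_\alpha$ by partition regularity, and limit steps take a diagonal union of finite blocks with growing Schur numbers (the paper's $B_n$ is an initial segment of $V_n=\bigcap_{i\le n}U_{\mu(i)}$ of length $S(n,n)$). The halving step you call delicate is immediate. If $F\cap X$ and $F\setminus X$ both had $k$-colourings without monochromatic triples, using them with disjoint palettes gives such a $2k$-colouring of $F$. This is exactly the argument of Proposition~\ref{partition}. Pigeonhole over $n$ then picks the side that is good for infinitely many blocks, which is what preserves your invariant; merely taking "the Schur-large piece" as in your step (i) would not.

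The genuine gap is the initialisation of your strengthened invariant. At $A_0=A$ you only know that $A$ lies in some free Schur ultrafilter $u$ (equivalently, your partition form of the characterization). You never produce pairwise disjoint finite $F^0_n\subseteq A$ with $s(F^0_n)\to\infty$, yet both your successor and limit steps operate only on such blocks. Your partition characterization concerns partitions of the infinite set $A$, and getting finite witnesses from it requires a compactness argument. That argument is precisely the paper's Proposition~\ref{fin}, stated there for the two-dimensional configurations.

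Here is the fix. Suppose that for some $k$ and finite $E$, every initial segment $\{x_1,\dots,x_p\}$ of $A\setminus E$ had a $k$-colouring $\phi_p$ with no monochromatic triple. A diagonal (K\"onig) argument then yields a $k$-colouring $\psi$ of $A\setminus E$ whose restriction to each initial segment agrees with some $\phi_d$. Hence no colour class of $\psi$ contains a triple. But $A\setminus E\in u$ because $u$ is free, so some colour class lies in $u$, contradicting that $u$ is Schur. Choosing each block outside the union of the earlier ones then initialises the invariant.

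With this lemma added, your proof is complete. It is also slightly more economical than the paper's: working with plain triples, you need neither the two-dimensional numbers $S(r,k)$ nor Proposition~\ref{nice}.
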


We left the following question open:

\begin{question}
Does ZFC imply there existence of a free infinitary Schur ultrafilter on $\mathbb Z$ that is a weak P-point?    \end{question}

A surjective function $c: X\rightarrow \{1,\ldots, r\}$ is called an {\em $r$-coloring} of a set $X$. 

\begin{definition*}
Let $G$ be a countable commutative group. A subset $A=\{x_n:n\in\N\}\subseteq G$ is called {\em Schur} if for every $r,k\in\N$ there exists a minimal number $S(r,k)\in\N$ such that for every $r$-coloring of $X=\{x_n: n\leq S(r,k)\}$ there exist $a\in X$ and $\{b_1,\ldots,b_k\}\subseteq X$ such that $\{a\}\cup \{b_1,\ldots,b_k\}\cup \{a+b_i: i\leq k\}$ is a monochromatic subset of $X$.
\end{definition*}
It is easy to see that the Schur property of a subset $A$ does not depend on the enumeration of $A$. In contrast, the numbers $S(r,k)$ do depend on the enumeration of $A$. The following combinatorial description of elements of free Schur ultrafilters on countable commutative groups, which is crucial for the proofs of Theorems~\ref{dense} and~\ref{pp}, is the third principal result of this paper.

\begin{theorem}\label{Schursubset}
Let $G$ be a countable commutative group. A subset $A\subseteq G$ belongs to some free Schur ultrafilter on $G$ if and only if $A$ is Schur.     
\end{theorem}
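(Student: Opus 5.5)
We prove both directions through the family $\mathcal I$ of subsets of $G$ that are \emph{not} Schur. The plan is to show that $\mathcal I$ is an ideal containing all finite sets, and that the free Schur ultrafilters are exactly the ultrafilters disjoint from $\mathcal I$.

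\emph{Step 1: compactness.} We first reformulate non-Schurness. A countable set $B$ fails to be Schur iff there are $r,k\in\N$ and an $r$-coloring of all of $B$ with no monochromatic configuration $\{a\}\cup\{b_1,\dots,b_k\}\cup\{a+b_i:i\le k\}$ (with the $b_i$ distinct). One direction is trivial. For the other, for each $n$ there is a bad $r$-coloring of the first $n$ elements, and K\"onig's lemma applied to the finitely branching tree of such colorings yields a bad coloring of all of $B$. (Colorings need not be surjective here; this only affects the value of $r$.)

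\emph{Step 2: $\mathcal I$ is an ideal containing the finite sets.} Let $B,C\in\mathcal I$ with bad colorings $c_B$ (with $r_1,k_1$) and $c_C$ (with $r_2,k_2$). Color $B\cup C$ by $c_B$ on $B$ and by $r_1+c_C$ on $C\setminus B$, and put $k=\max(k_1,k_2)$. A monochromatic configuration for $k$ lies inside $B$ or inside $C$, and it contains one for $k_1$ or for $k_2$ respectively, which is a contradiction. Hence $B\cup C\in\mathcal I$. Closure under subsets is immediate. A finite set $F$ lies in $\mathcal I$: give each point its own color and take $k=2$.

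\emph{Step 3: ($\Leftarrow$).} Let $A$ be Schur. Then $\{A\}\cup\{G\setminus I: I\in\mathcal I\}$ has the finite intersection property. Indeed, $A\subseteq I_1\cup\dots\cup I_m$ would put $A$ in $\mathcal I$. Extend this family to an ultrafilter $u$. Then $u$ is free, since it contains all cofinite sets. Every $U\in u$ is Schur, because otherwise $G\setminus U\in u$. Applying the definition with $r=k=1$ gives $a,b\in U$ with $a+b\in U$. So $u$ is a free Schur ultrafilter containing $A$.

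\emph{Step 4: ($\Rightarrow$), the main obstacle.} Suppose $u$ is free Schur and $A\in u$ is not Schur. Fix a bad coloring from Step 1; some color class $U$ belongs to $u$. For each $a\in U$ the set $P_a=\{b\in U: a+b\in U\}$ then has fewer than $k$ elements. The difficulty is to pass from ``at most $k-1$ partners'' to a set in $u$ with \emph{no} Schur triple.

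We exploit commutativity through the following construction.
\begin{itemize}
\item Form a graph on $U$ with an edge $a$--$b$ and an edge $a$--$(a+b)$ for every Schur triple $a,b,a+b\in U$.
\item Orient each such edge away from $a$, using that $b\in P_a$.
\item Since $a+b=b+a$, each triple is also generated from $b$, so every edge is charged to a vertex that has fewer than $k$ such edges. The resulting orientation therefore has out-degree $<2k$.
\end{itemize}
Every finite subgraph with $n$ vertices has fewer than $2kn$ edges, so it has a vertex of degree $<4k$. Degeneracy thus gives a $4k$-coloring of each finite subgraph, and by compactness a proper $4k$-coloring of $U$. In this coloring no triple $a,b,a+b$ has $a$ and $a+b$ of the same color; the case $a=b$ gives the edge $a$--$2a$ and is handled the same way.

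One color class $V\subseteq U$ lies in $u$ and contains no $a,b$ with $a+b\in V$. This contradicts the Schur property of $u$.

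The step I expect to need the most care is exactly this bounded out-degree coloring argument, in particular verifying that the orientation really has bounded out-degree when triples are counted with both summands.
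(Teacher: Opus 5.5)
Your proposal is correct. For ($\Leftarrow$) it is essentially the paper's argument. Your Step~2 is Proposition~\ref{partition}, phrased as ``non-Schur sets form an ideal'', and Step~3 is the proof of [HS, Theorem~3.11] written out. Because you first pass to colourings of whole sets via Step~1, your Step~2 avoids the paper's bookkeeping with initial segments and makes freeness explicit.

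The genuine difference is in ($\Rightarrow$). The paper (Proposition~\ref{fin}) performs the same compactness step. It then invokes the cited Proposition~\ref{nice} from [BZl], which immediately gives a point of the colour class with at least $k$ partners, contradicting $|P_a|<k$. You instead prove the contrapositive of that cited fact yourself. A bounded out-degree orientation gives degeneracy, and compactness then gives a finite partition of $U$ into pieces containing no $a,b,a+b$. This makes the proof self-contained and shows that Proposition~\ref{nice} is at heart a finite graph-colouring statement. The paper's route is shorter only because it outsources this step.

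Some remarks on the step you flagged:
\begin{enumerate}[\rm(i)]
\item A loop could also arise from $b=0$ via the edge $a$--$(a+b)$. This cannot happen, since $0\in U$ would give $P_0=U$, which is infinite, contradicting $|P_0|<k$.
\item The edges $a$--$b$ are unnecessary. Properly colouring the edges $a$--$(a+b)$ alone, each charged to $a$, already destroys every triple. This gives out-degree $<k$, so $2k-1$ colours suffice.
\item Your appeal to $a+b=b+a$ is superfluous. The passage from $|P_a|<k$ to a finite sum-free partition never uses commutativity.
\end{enumerate}
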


It is clear that the set $\N$ of positive integers with its usual increasing enumeration is a Schur subset of $\IZ$. Thus, for each $r,k\in\N$ the positive integer $S(r,k)$ is well-defined. 
We refer to these integers as {\em two dimensional Schur numbers}. Recall that the classical Schur numbers $S(r)$, $r\in\N$ are defined as follows: $S(r)$ is the minimal positive integer such that for every $r$-coloring of the set $X=\{n\in\N: n\leq S(r)\}$ there exist $a,b\in X$ such that the set $\{a,b,a+b\}$ is monochromatic. In our terminology, we have $S(r)=S(r,1)$ for each $r\in\N$. Despite its relatively simple definition, the computational complexity of the classical Schur numbers is extremely high. In particular, the equality $S(5)=161$
was established as recently as 2017 by Heule~\cite{H} through the use of massive parallel computing. In particular, their computation required 2 petabytes of space, see the news article~\cite{news}. This motivate the following fairly general question:

\begin{question}
 What can be said about the values of two-dimensional Schur numbers $S(r,n)$, where $r,n\in\N$?   
\end{question}

\section{Proofs of main results} 
The following result was proven in~~\cite[Corollary 3.11]{BZl}.

\begin{proposition}[Bardyla, Zlato\v{s}]\label{nice}
Let $u$ be a Schur ultrafilter on a commutative group $G$. Then for each $U\in u$ and $n\in\N$ there exists $x\in U$ such that $|\{y\in U:x+y\in U\}|\geq n$.    
\end{proposition}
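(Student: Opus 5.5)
The plan is to argue by contradiction and turn the failure of the conclusion into a finite partition of $U$ with no Schur triple. Since $u$ is an ultrafilter, one piece of such a partition would belong to $u$, contradicting that $u$ is Schur. So fix $U\in u$ and $n\in\N$, and suppose that for every $x\in U$ the set $B_x=\{y\in U: x+y\in U\}$ has fewer than $n$ elements. I will assume $u$ is free, as in the intended application; if needed, the principal case can be checked separately. Freeness lets me replace $U$ by $U\setminus\{0\}$, which still lies in $u$ and still satisfies $|B_x|<n$, since shrinking $U$ only shrinks each $B_x$.

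\textbf{Step 1 (pairs $a\neq b$).} Define a graph $\Gamma$ on $U$ by putting an edge between distinct $x,y$ whenever $x+y\in U$. Commutativity makes this symmetric: $y\in B_x$ if and only if $x\in B_y$. So every vertex has degree at most $n-1$. A graph of maximal degree $<n$ admits a proper $n$-coloring $c_1$. For this, colour greedily along a well-ordering of $U$ (which is countable when $G$ is): each vertex has fewer than $n$ neighbours in total, so some colour is always free. In any triple $a,b,a+b\in U$ with $a\neq b$, the vertices $a$ and $b$ are adjacent, so $c_1(a)\neq c_1(b)$.

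\textbf{Step 2 (the degenerate case $a=b$).} It remains to separate $a$ from $2a$ whenever $a,2a\in U$. Let $D=\{x\in U: 2x\in U\}$ and $f\colon D\to U$, $f(x)=2x$. Since $0\notin U$, $f$ has no fixed points. A partial self-map without fixed points admits a $3$-colouring $c_2$ of $U$ with $c_2(x)\neq c_2(f(x))$ for all $x\in D$. This is standard: finite subgraphs of a functional graph have components with at most one cycle, and these are $3$-colourable. Then either use the compactness (de Bruijn--Erd\H{o}s) argument, or colour directly component by component, handling the possible cycle and then the in-trees by levels.

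\textbf{Step 3 (conclusion).} Colour $U$ by the pair $(c_1,c_2)$, giving at most $3n$ classes. A Schur triple $a,b,a+b$ in one class is impossible: if $a\neq b$, then Step 1 gives $c_1(a)\neq c_1(b)$; if $a=b$, then $a\in D$ and Step 2 gives $c_2(a)\neq c_2(2a)$. Exactly one class lies in $u$, and it contains no $a,b$ with $a+b$ in it, contradicting the Schur property of $u$.

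The main obstacle is the degenerate case $a=b$. The bound $|B_x|<n$ controls only the out-neighbours of $x$ under addition, not the number of $y$ with $2y=x$, so Step 1 alone does not handle it. The point of Step 2 is that separating $x$ from $f(x)$ needs only $3$ colours whatever the in-degrees of $f$ are, which is why I would split the argument this way.
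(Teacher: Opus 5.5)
\textbf{Comparison with the paper.} This paper does not prove the proposition. It quotes it from Bardyla--Zlato\v{s} (Corollary~3.11 of their paper), so there is no in-text argument to compare against.

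\textbf{Your argument is correct and self-contained.}
\begin{itemize}
\item The sum graph on $U\setminus\{0\}$ has maximum degree at most $n-1$, because the neighbours of $x$ are exactly $B_x\setminus\{x\}$. This uses commutativity for symmetry. A greedy colouring with $n$ colours then separates $a$ from $b$ in every triple $a,b,a+b$ with $a\neq b$.
\item You are right that the degenerate triples $a,a,2a$ need separate treatment. The number of halves $y$ of a given $x$ can be infinite, for example when $G$ has a large $2$-torsion subgroup. Your $3$-colouring for the fixed-point-free partial map $x\mapsto 2x$ handles this. Finite pieces of a functional graph are pseudoforests, hence $3$-colourable, and de Bruijn--Erd\H{o}s (or a direct construction) extends this to the whole set.
\item Both steps work for groups of arbitrary cardinality, matching the generality of the statement: transfinite greedy colouring suffices since each vertex has fewer than $n$ neighbours in total.
\item The product colouring has at most $3n$ classes, none containing a triple $a,b,a+b$ in the class. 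Since $U\setminus\{0\}\in u$ and $u$ is an ultrafilter, one class belongs to $u$, contradicting the Schur property.
\end{itemize}

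\textbf{One correction: freeness is necessary, not a convenience.} The principal case cannot be ``checked separately'' because the statement is false there. The principal ultrafilter at $0$ is Schur: take $a=b=0$. It is in fact the only principal Schur ultrafilter, since $2g=g$ forces $g=0$. But for $U=\{0\}$ and $n=2$,
\[
\{y\in U: 0+y\in U\}=\{0\},
\]
so the conclusion fails. The proposition must therefore be read for free $u$. That is exactly how it is used in the proof of Proposition~\ref{fin}. Under that hypothesis, removing $0$ from $U$ is legitimate, and the rest of your proof goes through unchanged.
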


\begin{proposition}\label{fin}
Let $u$ be a free Schur ultrafilter on a countable commutative group $G$. Then every $U\in u$ is a Schur set.  
\end{proposition}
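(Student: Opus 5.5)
We argue by contradiction via a compactness (König's lemma) argument. Fix $U\in u$ with an enumeration $U=\{x_n:n\in\N\}$; since $u$ is free, $U$ is infinite. Suppose $U$ is not Schur. Then there are $r,k\in\N$ such that for every $N$ there is an $r$-coloring $c_N$ of $X_N=\{x_n:n\le N\}$ with no monochromatic configuration $\{a\}\cup\{b_1,\dots,b_k\}\cup\{a+b_i:i\le k\}$ inside $X_N$. (Surjectivity of the colorings is harmless: we may allow fewer colors, or use unused colors on extra points, and the conclusion is unaffected.)

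The key steps are as follows.

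First, combine the $c_N$ into one coloring of all of $U$. The space $\{1,\dots,r\}^{U}$ is compact, so a subsequence of the $c_N$ converges pointwise; equivalently, König's lemma applies to the finitely branching tree of good colorings of the initial segments. The result is a coloring $c:U\to\{1,\dots,r\}$ with the following property: every finite restriction of $c$ agrees with some $c_N$ on that finite set. Consequently $c$ admits no monochromatic configuration of the above type anywhere in $U$. Indeed, any such configuration is finite, so it lies in some $X_M$, and it would then already be monochromatic for some $c_N$ with $N\ge M$, which is impossible.

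Second, pass to the ultrafilter. The sets $c^{-1}(i)$, $i\le r$, partition $U\in u$. Since $u$ is an ultrafilter, one of them, say $V=c^{-1}(i)$, belongs to $u$.

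Third, apply Proposition~\ref{nice} to $V\in u$ with $n=k$. This gives $a\in V$ and at least $k$ elements $b_1,\dots,b_k\in V$ with $a+b_j\in V$ for all $j$. All of $a$, the $b_j$ and the $a+b_j$ lie in $V\subseteq U$ and have color $i$, so they form a monochromatic configuration, contradicting the first step.

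Hence $S(r,k)$ exists for all $r,k$, that is, $U$ is Schur.

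The only delicate point is the first step. We must check that the condition "no monochromatic configuration" is finitary, so that it transfers to the limit coloring. This holds because each configuration involves only $2k+1$ specified elements, each lying in some $X_M$. The definition allows the $b_j$ to coincide with $a$ or with each other, but Proposition~\ref{nice} yields $k$ distinct $b_j$, which only strengthens what we obtain.
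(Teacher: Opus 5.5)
Your proof is correct and is essentially the paper's argument. The paper also argues by contradiction and builds a limit coloring $\psi$ of $U$ from the bad colorings $\phi_p$; it does this by an explicit pigeonhole diagonalization, which is your compactness/K\"onig step. It then picks a color class in $u$ and applies Proposition~\ref{nice} to obtain a monochromatic configuration inside some initial segment $\{x_n: n\le p'\}$ on which $\psi$ agrees with a bad coloring, a contradiction.
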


\begin{proof}
Fix an enumeration $U=\{x_n: n\in \N\}$. Assuming the contrary, there exist $r,k\in\N$ such that for every $p\in\N$ exists an $r$-coloring $\phi_p$ of $\{x_n: n\leq p\}$ with the following property: for each monochromatic subset $B$ of $\{x_n: n\leq p\}$, for each $y\in B$ we have $|\{z\in B: y+z\in B\}|<k$. The Pigeonhole Principle yields a color $c_1\in \{1,\ldots, r\}$ such that the set $M[1]=\{p\in\N: \phi_p(x_1)=c_1\}$ is infinite. Similarly, there exists a color $c_{2}\in \{1,\ldots, r\}$ such that the set $M[2]=\{p\in M[1]: \phi_p(x_{2})=c_{2}\}$ is infinite. Proceeding in this way, we obtain a decreasing family $\{M[n]: n\in\N\}$ of infinite subsets of $\N$, and a finite coloring $\psi$ of $U$ that colors $x_n$ into $c_n$. Since $u$ is an ultrafilter, there exists $q\leq r$ such that $\psi^{-1}(q)\in u$. By Proposition~\ref{nice}, there exist $a\in \psi^{-1}(q)$ and $B\subseteq \psi^{-1}(q)$ such that $|B|=k$ and $a+B\subseteq \psi^{-1}(q)$. Let $p'$ be any positive integer such that for each $x_n\in \{a\}\cup B\cup (a+B)$ we have $n\leq p'$.
Fix any $d\in M[p']$ and notice that the colorings $\phi_d$ and $\psi$ have the same traces on $\{x_n: n\leq p'\}$. But then the monochromatic subset $\{a\}\cup B\cup (a+B)\subseteq \{x_n: n\leq p'\}$ contradicts the choice of $\phi_d$.
\end{proof}


The proof of the following lemma is straightforward and thus is left to the reader.

\begin{lemma}\label{cofin}
 If $A$ is a Schur subset of a commutative group $G$, then every subset $B\subseteq G$ such that $A\subseteq^* B$ is Schur.   
\end{lemma}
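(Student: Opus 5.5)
The plan is to argue directly from the definition of a Schur set. Everything reduces to the fact that a Schur witness lives inside a finite initial segment, together with a re-indexing trick. Write $A=\{x_n:n\in\N\}$ and fix $B\subseteq G$ with $A\subseteq^* B$, so that $F=A\setminus B$ is finite. The set $B$ is countable since $G$ is. If $B$ is finite then, as $A$ is infinite, so is $F$, which is impossible. Hence $B$ is infinite and we may enumerate it.

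The key choice is an enumeration of $B$ that front-loads $A\cap B$ in its original order. Let $A'=A\cap B=A\setminus F$; this is cofinite in $A$. First I would show that $A'$ is Schur with respect to the induced enumeration $A'=\{x_{n_j}:j\in\N\}$, with $n_1<n_2<\cdots$. Given $r,k$, note that $|F|\le m$ for some $m$. Consider any $r$-coloring of an initial segment $\{x_{n_j}:j\le N\}$, where $N$ is large enough that this segment contains $\{x_n:n\le S_A(r+1,k)\}\setminus F$. Extend it to $\{x_n:n\le S_A(r+1,k)\}$ by giving every point of $F$ a new colour $r+1$. The Schur property of $A$ yields $a$ and $b_1,\dots,b_k$ such that $\{a\}\cup\{b_i\}\cup\{a+b_i\}$ is monochromatic.

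The main obstacle is that this monochromatic set could be the colour $r+1$, i.e. lie inside $F$. To rule this out I would use the colour budget more cleverly: colour the points of $F$ with pairwise distinct new colours, using $r+|F|$ colours in total. The obvious plan is then that a monochromatic configuration in a new colour is a single point $a$ with $a=b_i$ and $2a=a$, forcing $a=0$. To handle the case $0\in F$, I would simply remove $0$ first, or observe that $A\setminus\{0\}\subseteq^*$-witnesses still work. One could also apply the definition to $A$ with an extra colour reserved for $0$ if needed. One small subtlety is that the definition says the coloring must be surjective. I would deal with this by allowing colourings with at most $r$ colours, which is harmless: a colouring with fewer colours can be padded, or we note monotonicity, $S(r',k)\le S(r,k)$ for $r'\le r$, by the same argument. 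Having excluded the new colours, the monochromatic configuration lies in $A'$ within the first $N$ terms, so $A'$ is Schur.

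Finally, enumerate $B$ by listing $A'$ in the above order, interleaved with $B\setminus A'$: for example, put the elements of $A'$ in the odd positions and the elements of $B\setminus A'$ in the even positions, or place all of $A'$ first if $B\setminus A'$ is finite. Given an $r$-colouring of an initial segment of $B$ of length $2N'$, its restriction to the first $N'$ elements of $A'$ is a colouring with at most $r$ colours, so it contains the required monochromatic configuration, which lies in $B$. Hence $S_B(r,k)$ exists and $B$ is Schur. This is also consistent with the paper's remark that the Schur property does not depend on the enumeration.
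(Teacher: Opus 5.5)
Your overall plan is the right one: first show that $A'=A\setminus F$ is Schur, then pass to the superset $B$ by restricting colourings. The second step is fine. You do not even need the interleaved enumeration, since the Schur property is independent of the enumeration; also, a finite $B\setminus A'$ would have to be listed before $A'$, not after.

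The gap is in the only nontrivial step: excluding a monochromatic configuration that lies inside one of the new singleton colour classes. You correctly see that this can only happen via $a=b_1=\dots=b_k=0$ with $0\in F$, but none of your remedies closes this case.
\begin{itemize}
\item ``Removing $0$ first'' means proving that $A\setminus\{0\}$ is Schur. That is precisely the instance of the lemma you are stuck on.
\item Reserving an extra colour for $0$ is what you have already done. It does not help, because $\{0\}$ is monochromatic under every colouring.
\end{itemize}
In fact, if the $b_i$ were allowed to coincide, the lemma would be false. Then $A=\{0\}\cup\{2^n:n\in\N\}$ would be Schur for the trivial reason $0+0=0$, while $A\setminus\{0\}$ is not: colour $2^n$ by the parity of $n$, and note that for $a,b\in A\setminus\{0\}$ one has $a+b\in A\setminus\{0\}$ only when $a=b$. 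So the case $0\in F$ cannot be handled by any device of the kind you suggest without further input.

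The missing input is that $\{b_1,\ldots,b_k\}$ is a $k$-element set, combined with monotonicity in $k$. This is how the definition is used in the paper (cf.\ $|B|=k$ in the proofs of Propositions~\ref{fin} and~\ref{partition}).

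Given $r,k$, set $k'=\max\{k,2\}$. Apply the Schur property of $A$ with $r+|F|$ colours and parameter $k'$, giving each point of $F$ its own new colour. Any monochromatic configuration now contains two distinct points $b_1\neq b_2$, so it cannot lie in a singleton colour class. Hence it lies in $A\setminus F$, within the first $N$ terms of $A'$, and is monochromatic for the original colouring. Keeping only $k$ of the $b_i$ gives the required configuration.

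For $k\ge 2$ your argument already works as written. What fails is exactly the case $k=1$ with $0\in A\setminus B$, where the trivial configuration $a=b_1=0$ is always available.
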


Recall that a family $\mathcal R$ of nonempty subsets of a set $X$ is called {\em partition regular} if for each finite family $\mathcal A$ of subsets of $X$ such that $\bigcup\mathcal A\in \mathcal R$ we have that there exists $A\in\mathcal A$ and $B\in\mathcal R$ such that $B\subseteq A$ (see \cite[Chapter 3.1]{HS} for more information).

The following result implies that the family of all Schur subsets of a countable commutative group $G$ is partition regular.

\begin{proposition}\label{partition}
Let $A$ be a Schur subset of a countable commutative group $G$. Then for every partition $A=X\sqcup Y$ either $X$ is Schur or $Y$ is Schur.    
\end{proposition}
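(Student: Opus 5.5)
Suppose, towards a contradiction, that neither $X$ nor $Y$ is Schur. The plan is to combine witnessing colorings for $X$ and for $Y$ into a single coloring of an initial segment of $A$ with no monochromatic Schur pattern. This contradicts the Schur property of $A$.

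First, we normalise the enumerations. Fix the enumeration $A=\{x_n:n\in\N\}$. Since the Schur property does not depend on the enumeration, we may enumerate $X$ and $Y$ in the order inherited from $A$. If $X$ is finite, then $Y=^*A$ and $Y\subseteq A$. In that case I would argue, in the spirit of Lemma~\ref{cofin}, that $Y$ is Schur. The argument colors the finitely many elements of $X$ with extra fresh colors, so an $(r+|X|)$-coloring witness for $A$ produces a pattern avoiding $X$. The same applies when $Y$ is finite, so assume both are infinite.

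Now the main step. Since $X$ is not Schur, there are $r_1,k_1$ such that for every $p$ there is an $r_1$-coloring $\phi_p$ of the first $p$ elements of $X$ with no monochromatic configuration $\{a\}\cup\{b_1,\dots,b_{k_1}\}\cup\{a+b_i:i\le k_1\}$. Likewise $Y$ has $r_2,k_2$ and colorings $\psi_p$. Put $r=r_1+r_2$ and $k=\max(k_1,k_2)$.

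Given $p$, color $\{x_n:n\le p\}$ as follows. Elements of $X$ among them are colored by $\phi_{p}$ restricted appropriately, with colors $1,\dots,r_1$. Elements of $Y$ are colored by $\psi_p$, with colors shifted to $r_1+1,\dots,r_1+r_2$. Any monochromatic set then lies entirely in $X$ or entirely in $Y$, and all its elements are among the first $p$ elements of that set. A monochromatic configuration with parameter $k$ contains one with parameter $k_1$ (respectively $k_2$), simply by taking a subfamily of the $b_i$. So it would be monochromatic for $\phi_p$ or $\psi_p$, which is impossible. Hence for every $p$ the first $p$ elements of $A$ admit a bad $r$-coloring, contradicting the existence of $S(r,k)$ for $A$.

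One technicality needs care: surjectivity of the colorings in the definition. This can be ignored, since non-surjective colorings only help, or it can be fixed by merging colors. I also need the $b_i$ in the definition to be distinct, so that shrinking $k$ is legitimate. I expect the only real obstacle to be the bookkeeping above, namely the initial-segment indexing and the finite-case reduction, rather than any genuine combinatorics.
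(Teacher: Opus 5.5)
Your proposal is correct and is essentially the paper's argument. You first dispose of a finite part, which is exactly Lemma~\ref{cofin} since then $A\subseteq^* Y$ or $A\subseteq^* X$; you then combine bad colorings of $X$ and $Y$ on disjoint palettes, with $r=r_1+r_2$ and $k=\max\{k_1,k_2\}$, to contradict the Schur property of $A$. The only cosmetic difference is that you restrict $\phi_p,\psi_p$ to the relevant initial segments, whereas the paper directly uses $\phi^X_f,\phi^Y_g$ with $f=|X\cap T|$ and $g=|Y\cap T|$.
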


\begin{proof}
Let $A=\{a(n):n\in\N\}$ be an enumeration of $A$.
Seeking a contradiction, assume that there exists a partition $A=X\sqcup Y$ such that neither $X$ nor $Y$ is Schur. By Lemma~\ref{cofin}, without loss of generality we can assume that both $X$ and $Y$ are infinite. 
Let $$X=\{a(n_l):l\in\N\}\qquad \hbox{ and }\qquad Y=\{a(m_l):l\in\N\},$$ where the sequences $\{n_l: k\in\N\}$ and $\{m_l:l\in\N\}$ are increasing. 
By the assumption, there exist $r_1,k_1\in\N$ such that for every $p\in\N$ there exist $r_1$-coloring $\phi^X_p$ of $\{a(n_k): k\leq p\}$ with the following property: 
 for each monochromatic subset $B$ of $\{a(n_k): k\leq p\}$, for each $b\in B$ we have $|\{z\in B: b+z\in B\}|<k_1$. Similarly, there exist $r_2,k_2\in\N$ such that for every $p\in\N$ there exist $r_2$-coloring $\phi^Y_p$ of $\{a(m_k): k\leq p\}$ with the following property:  for each monochromatic subset $B$ of $\{a(m_k): k\leq p\}$, for each $b\in B$ we have $|\{z\in B: b+z\in B\}|<k_2$.

Let $r=r_1+r_2$ and $k=\max\{k_1,k_2\}$. Since $A$ is a Schur set, Proposition~\ref{fin} yields a number $S(r,k)\in\N$ such that for every $r$-coloring of the set $T=\{a(n): n\leq S(r,k)\}$ there exist $a\in T$ and $B\subseteq T$ such that $|B|=k$ and the set $\{a\}\cup B\cup (a+B)\subseteq T$ is monochromatic. Put $$f=|X\cap T|\qquad \hbox{ and }\qquad g=|Y\cap T|.$$ By the choice of enumerations of $X$ and $Y$ we get 
$$T= \{a(n_k): k\leq f\}\cup  \{a(m_k): k\leq g\}.$$
Define an $r$-coloring $\psi$ of $T$ as follows:
$$
\psi(a)=\begin{cases} 
\phi_f^X(a), &\hbox{if } a\in \{a(n_k): k\leq f\};\\
\phi_g^Y(a)+r_1, &\hbox{if } a\in \{a(m_k): k\leq g\}.
\end{cases}
$$ 

By the choice of $S(r,k)$, there exist $a\in T$ and $B\subseteq T$ such that $|B|=k$ and the set $\{a\}\cup B\cup (a+B)\subseteq T$ is monochromatic with respect to the coloring $\psi$. By the definition of the coloring $\psi$, either $\{a\}\cup B\cup (a+B)\subseteq X$ or  $\{a\}\cup B\cup (a+B)\subseteq Y$. In the former case the set $\{a\}\cup B\cup (a+B)$ is monochromatic with respect to the coloring $\phi^X_f$, whereas in the latter case the set $\{a\}\cup B\cup (a+B)$ is monochromatic with respect to the coloring $\phi^Y_g$. Taking into account the definition of $k$, we obtain a contradiction with the choice of $\phi^X_f$ and $\phi^Y_g$.    
\end{proof}

We are in a position to prove Theorem~\ref{Schursubset}. We need to show that a subset $A$ of a countable commutative group $G$ belongs to some free Schur ultrafilter on $G$ if and only if $A$ is Schur.

\begin{proof}[{\bf Proof of Theorem~\ref{Schursubset}}]
The implication ($\Rightarrow$) follows from Proposition~\ref{fin}.

The implication ($\Leftarrow$) follows from Proposition~\ref{partition} and~\cite[Theorem 3.11]{HS} applied to the family of all Schur subsets of $G$.
\end{proof}

We are in a position to prove Theorem~\ref{dense}. We need to show that the set $\Schf(\mathbb Z)$ is an open dense subset of $\Sch(\mathbb Z)$.

\begin{proof}[{\bf Proof of Theorem~\ref{dense}}]
 Fix any free Schur ultrafilter $u$ on $\IZ$ and its element $U$. Without loss of generality we can assume that $U\subseteq \N$. Consider the increasing enumeration $U=\{x_n:n\in\N\}$. By Theorem~\ref{Schursubset}, to show that $\Schf(\mathbb Z)$ is dense in $\Sch(\mathbb Z)$, it suffices to construct a Schur subset $W\subseteq U$ such that for each $a\in W$ the set $\{b\in W: a+b\in W\}$ is finite. By Proposition~\ref{fin}, $U$ is a Schur set. Hence the number $p_1=S(1,1)$ is well defined for the given enumeration of $U$. 
 Let $B_1=\{x_n: n\leq p_1\}$. Since the ultrafilter $u$ is free, we have $U_1=\{x_n:n> 2p_1\}\in u$. Consider the following partition of the set $U_1$ into the sets $A_i^1$, $0\leq i\leq x_{p_1}$: for every $a\in U_1$, we have that $a\in A_i^1$ if and only if the remainder of the division of $a$ by $x_{p_1}+1$ equals $i$. Since $u$ is an ultrafilter and $U_1\in u$, there exists $0\leq i\leq x_{p_1}$ such that $A_i^1\in u$. Fix the increasing enumeration $A_i^1=\{a^1_n: n\in \N\}$. By Proposition~\ref{fin}, the set $A^1_i$ is Schur. Thus the number $p_2=S(2,2)$ is well defined for the given enumeration of $A^1_i$. 
 Let $B_2=\{a^1_n: n\leq p_2\}$.
 Note that $(B_1+B_1)\cap B_2=\emptyset$, as $U_1\cap \{x_n: n\leq 2p_1\}=\emptyset$ and $B_2\subseteq A^1_i\subseteq U_1$. Recall that for each $a\in B_1$ we have $a< x_{p_1}+1$. Then for every $y\in B_2$ the remainder of the division of $x+y$ by $x_{p_1}+1$ does not equal $i$, which implies $x+y\notin B_2$. Thus $(B_1+B_2)\cap B_2=\emptyset$. Also, it is clear that $(B_2+B_2)\cap B_1=\emptyset$. Hence for any $i,j,m\in\{1,2\}$ we have $$(B_i+B_j)\cap B_m\neq \emptyset \quad\Longleftrightarrow \quad i=j=m.$$ By the choice of $B_2$, for each $2$-coloring of $B_1\cup B_2$ there exist $b\in B_2$ and $\{c,d\}\subseteq B_2$ such that $\{b,c,d,b+c,b+d\}$ are monochromatic.

 Assume that we have already constructed a family of finite pairwise disjoint subsets $\{B_j: j\leq k\}$ of $U=\{x_n:n\in\N\}$,  such that 
 \begin{enumerate}
     \item for all $i,j,m\in\{1,\ldots, k\}$ we have $(B_i+B_j)\cap B_m\neq \emptyset$ if and only if $i=j=m$;
     \item for every $j\leq k$, for each $j$-coloring of $\bigcup_{i\leq j}B_i$ there exist $b\in B_j$ and a subset $C\subseteq B_j$ of cardinality $j$ such that the set $\{b\}\cup C\cup(b+C)\subseteq B_j$ is monochromatic.
 \end{enumerate}
 Let $q=\max\{n\in\N: x_n\in \bigcup_{j\leq k}B_j\}$. Since $u$ is a free Schur ultrafilter, we have that the set $U_{k}=\{x_n: n>2q\}\subseteq U$ is a Schur set that belongs to $u$. Consider the following partition of the set $U_k$ into the sets $A_i^k$, $0\leq i\leq x_{q}$: for every $a\in U_k$, we have that $a\in A_i^k$ if and only if the remainder of the division of $a$ by $x_{q}+1$ equals $i$. Since $U_k\in u$ and $u$ is an ultrafilter, there exists $0\leq i\leq x_{q}$ such that $A_i^k\in u$. Fix the increasing enumeration $A_i^k=\{a^k_n: n\in \N\}$. Since the set $A^k_i$ is Schur, the number $p_{k+1}=S(k+1,k+1)$ is well defined for the given enumeration of $A^k_i$. Put $B_{k+1}=\{a^k_n: n\leq p_{k+1}\}$. Similarly as in the case $k=2$, one can check that the family $\{B_j: j\leq k+1\}$ satisfies the inductive assumptions (1), (2) for $k+1$.

Upon completing the recursion we get a family $\{B_n:n\in \N\}$ consisting of pairwise disjoint finite subsets of $U$ which satisfies the following two conditions:
 \begin{enumerate}
     \item $(B_i+B_j)\cap B_m\neq \emptyset$ if and only if $i=j=m$;
     \item for every $n\in\N$, for each $n$-coloring of $\bigcup_{i\leq n}B_i$ there exist $b\in B_n$ and a subset $C\subseteq B_n$ of cardinality $n$ such that the set $\{b\}\cup C\cup(b+C)\subseteq B_n$ is monochromatic.
 \end{enumerate}

Put $W=\bigcup_{n\in\N}B_n$. It is clear that $W\subseteq U$.
 Condition (2) ensures that $W$ is a Schur set. Condition (1) implies that for every $a\in B_n\subseteq W$ the set 
 $\{b\in W: a+b\in W\}$ is finite, as it is contained in $B_n$. Hence $\Schf(\mathbb Z)$ is dense in $\Sch(\mathbb Z)$.
In order to show that $\Schf(\mathbb Z)$ is open in $\Sch(\IZ)$ fix any $v\in \Schf(\IZ)$. Then there exists an element $V\in v$ such that for each $a\in V$ the set $\{b\in V: a+b\in V\}$ is finite. It is easy to see that $\langle V\rangle\cap \Sch(\IZ)\subseteq \Schf(\IZ)$, witnessing that the set $\Schf(\IZ)$ is open in $\Sch(\IZ)$.
\end{proof}

We are in a position to prove Theorem~\ref{pp}. We need to show that under CH the set $\Schp(\IZ)$ is dense in $\Sch(\IZ)$.

\begin{proof}[{\bf Proof of Theorem~\ref{pp}}]
Fix a Schur subset $A\subseteq \IZ$. Without loss of generality we can assume that $A\subseteq \N$. By Theorem~\ref{Schursubset}, it suffices to construct a Schur $P$-point $u$ such that $A\in u$.
 Enumerate the set of all infinite subsets of $\IZ$ as $\{A_\alpha: \alpha\in \operatorname{Succ}(\mathfrak c)\}$, where $\operatorname{Succ}(\mathfrak c)$ is the set of all successor ordinals in $\mathfrak c$, and $A=A_0$. Let $U_0=A_0=A$. Assume that for all $\xi< \gamma<\mathfrak c$ we constructed Schur subsets $U_\xi$ of $\IZ$ which satisfy the following two conditions:
 \begin{enumerate}
     \item $U_\alpha\subseteq^* U_\beta$ for all $\beta\leq \alpha<\gamma$;
     \item for every successor ordinal $\xi\in\gamma$ we have either $U_\xi\subseteq A_\xi$ or $U_\xi\subseteq G\setminus A_\xi$;
 \end{enumerate}
 
If $\gamma=\delta+1$, then Lemma~\ref{partition} implies that either $U_\delta\cap A_\gamma$ is a Schur set, or  $U_\delta\cap (G\setminus A_\gamma)$ is a Schur set. If $U_\delta\cap A_\gamma$ is a Schur set, then put $U_\gamma=U_\delta\cap A_\gamma$. Otherwise, set $U_\gamma=U_\delta\cap (G\setminus A_\gamma)$. 

Assume that $\gamma$ is a limit ordinal. Since $\mathfrak c=\aleph_1$, there exists a cofinal increasing subset $\{\mu(n):n\in\N\}$ in $\gamma$. For each $n\in\N$ let $V_n=\bigcap_{i\leq n}U_{\mu(i)}$.  
Note that $U_{\mu(n)}\subseteq^*V_n$ for each $n\in\N$. Lemma~\ref{cofin} implies that the sets $V_n$, $n\in\N$ are Schur. For each $n\in\N$ fix the increasing enumeration $V_{n}=\{x^n_{k}: k\in\N\}$. 
Then for each $n\in \N$ the number $p_n=S(n,n)$ is well defined for the given enumeration of $V_n$. Let $B_n=\{x^n_k: k\leq p_n\}$ for every $n\in\N$.  Note that the family $\{B_n: n\in\N\}$ satisfies the following conditions:
 \begin{enumerate}[\rm(i)]
 \item $B_n\subseteq V_n\subseteq U_{\mu(n)}$ for all $n\in\N$;
     \item for every $n\in\N$, for each $\leq n$-coloring of $\bigcup_{i\in\N}B_i$ there exist $b\in B_n$ and a subset $C\subseteq B_n$ of cardinality $n$ such that the set $\{b\}\cup C\cup(b+C)\subseteq B_n$ is monochromatic.
 \end{enumerate}

Put $U_\gamma=\bigcup_{n\in\N}B_n$. 
 Condition (ii) ensures that $U_\gamma$ is a Schur set. Since $V_{n+1}\subseteq V_n$ for all $n\in\N$ and the sets $B_n$, $n\in\N$ are finite, condition (i) implies that $U_\gamma\subseteq^* V_n\subseteq U_{\mu(n)}$ for all $n\in\N$. Taking into account that the sequence $\{\mu(n):n\in\N\}$ is cofinal in $\gamma$, the inductive assumption (1) implies that $U_\gamma\subseteq^* U_\alpha$ for each $\alpha\leq \gamma$. 

 Upon completing the recursion, we obtain a family $\mathcal{U}=\{U_\alpha:\alpha\in\mathfrak c\}$ which is well ordered by the relation $\supseteq^*$. Let us verify that the filter $u$ generated by the family $\mathcal U$ is a Schur P-point. Fix any infinite subset $X$ of $\IZ$. There exists a successor ordinal $\alpha\in\mathfrak{c}$ such that $X=B_\alpha$. The set $U_\alpha$ witnesses that either $B_\alpha\in u$, or $\IZ\setminus B_\alpha\in u$. Hence $u$ is an ultrafilter. Since $\mathcal U$ is a base of $u$ consisting of Schur sets, Lemma~\ref{cofin} implies that $u$ is a Schur ultrafilter. Finally, fix a countable family $\{X_n:n\in\N\}\subseteq u$. Since $\mathcal U$ is a base of $u$, for each $n\in\N$ there exists $\xi(n)\in\mathfrak c$ such that $U_{\xi(n)}\subseteq X_n$. Put $\xi=\sup\{\xi_n:n\in\N\}$ and note that $U_\xi\subseteq^* X_n$ for all $n\in\N$. Hence $u$ is a P-point. 
\end{proof}


\section*{Acknowledgements}
The author would like to thank Lyubomyr Zdomskyy for a pivotal discussion on this topic.

\end{document}